\newtheorem{theorem}{Theorem}
\newtheorem{thmx}{Theorem}
\newtheorem{lemma}{Lemma}
\newtheorem{proposition}{Proposition}
\newtheorem{corollary}{Corollary}
\numberwithin{equation}{section}
\numberwithin{lemma}{section}
\numberwithin{proposition}{section}
\numberwithin{corollary}{section}
\numberwithin{remark}{section}
\begin{document}

\title{Rigidity of Schouten Tensor under Conformal Deformation}

\author{Mijia Lai}
\email{laimijia@sjtu.edu.cn}
\address{School of Mathematical Sciences, Shanghai Jiao Tong University, Shanghai 200240, China}

\author{Guoqiang Wu}
\email{gqwu@zstu.edu.cn}
\address{School of Science, Zhejiang Sci-Tech University, Hangzhou 310018, China}

\thanks{M. Lai's research is supported in part by National Natural Science Foundation of China No. 12031012, No. 12171313 and the Institute of Modern Analysis-A Frontier Research Center of Shanghai.}
\begin{abstract}
We obtain some rigidity results for metrics whose Schouten tensor is bounded from below after  conformal transformations. Liang Cheng recently proved that a complete, nonflat, locally conformally flat manifold with Ricci pinching condition ($Ric-\epsilon Rg\geq 0$) must be compact. This answers higher dimensional Hamilton's pinching conjecture on locally conformally flat manifolds affirmatively.  Since (modified) Schouten tensor being nonnegative is equivalent to a Ricci pinching condition, our main result yields a simple proof of Cheng's theorem.
\end{abstract}

\maketitle

\section{Introduction}
Rigidity phenomenon in differential geometry usually concerns the uniqueness of certain geometric models under various curvature conditions. A prominent example is the rigidity part of the positive mass theorem, which asserts that if the mass of an asymptotically flat manifold is zero, then the manifold must be the standard Euclidean space. There are also many studies of rigidity phenomenon within a fixed conformal class. For example, Hang and Wang~\cite{HW} proved a hemisphere rigidity theorem. Let $g$ be a metric conformal to $g_0$, the standard round metric on a hemisphere, suppose the scalar curvature $R\geq n(n-1)$ and the induced metric on the boundary is isometric to $g_0$, then $g$ must be isometric to $g_0$. See~\cite{BCE} for a fully nonlinear generalization and ~\cite{LW} for a higher order generalization of Hang-Wang's theorem, see also ~\cite{QY}, ~\cite{BMV} from the static metric point of view.

In this note, we discover some simple rigidity principles concerning the Schouten tensor under conformal transformations. The central theme is that on complete noncompact manifolds, one cannot  increase the Schouten tensor everywhere under conformal transformation. To be more precise, let $(M,g)$ be a complete Riemannian manifold, the Schouten tensor is defined as
\[
A_g=\frac{1}{n-2}\left( Ric-\frac{R}{2(n-1)}g \right).
\]
We have the decomposition of the Riemannian tensor as
\[
Riem=\mathcal{W}_g+A_g\odot g,
\]
where $\mathcal{W}_g$ is the Weyl tensor of $g$, and $\odot$ denotes the Kulkarni-Nomizu product. Since the Weyl tensor is conformally invariant, it amounts to study the change of the Schouten tensor for conformal change of metrics. For simplicity, we shall use
\[
A_g=\left( Ric-\frac{R}{2(n-1)}g \right)
\] as the definition of the Schouten tensor in the following. For $\tau >0$, the modified Schouten tensor $A^{\tau}_g$ is defined as
\[
A^{\tau}_g=Ric-\frac{\tau R}{2(n-1)} g.
\]
Then $\tau=1$ corresponds to the Schouten tensor, while $\tau=\frac{2(n-1)}{n}$ represents the trace-less Ricci tensor.  Our main theorem is

\begin{thmx} \label{T1}
      Let $g$ be a complete metric on $\mathbb{R}^n (n\geq 3)$ which is conformal to the standard Euclidean metric $g_0$, suppose $A^{\tau}_g\geq 0$ for some $\tau\in (0, \frac{2(n-1)}{n})$, then $g$ must be isometric to $g_0$.
\end{thmx}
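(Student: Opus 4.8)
The plan is to recast the curvature hypothesis $A^\tau_g\ge 0$ as a second-order differential inequality for the conformal factor, and then to use the completeness of $g$ to force that factor to be constant.

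\smallskip

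\emph{Setup.} First I would write $g=u^{-2}g_0$ with $u>0$ on $\mathbb{R}^n$, and use the standard formula for the Schouten tensor of a flatly conformal metric, namely $A^1_g=(n-2)\left(\frac{\nabla^2 u}{u}-\frac{|\nabla u|^2}{2u^2}g_0\right)$, where $\nabla^2$, $\nabla$ and $\Delta$ are taken with respect to the flat $g_0$. Since $A^\tau_g=A^1_g+\frac{(1-\tau)R_g}{2(n-1)}g$, and under this substitution $\frac{R_g}{2(n-1)}=u\Delta u-\frac n2|\nabla u|^2$, the hypothesis $A^\tau_g\ge 0$, after multiplying by the positive factor $u/(n-2)$, becomes the tensor inequality
\[ \nabla^2 u+\beta(\Delta u)\,g_0\ \ge\ \frac{1+n\beta}{2}\,\frac{|\nabla u|^2}{u}\,g_0,\qquad \beta:=\frac{1-\tau}{n-2}. \]
The role of the hypothesis $\tau<\frac{2(n-1)}{n}$ is precisely that $1+n\beta=\frac{2(n-1)-n\tau}{n-2}>0$, so the coefficient on the right is positive; this is the only place where the upper bound on $\tau$ is used, and it degenerates exactly at the traceless-Ricci endpoint.

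\smallskip

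\emph{Immediate consequences.} Taking the $g_0$-trace of this inequality and dividing by $1+n\beta>0$ gives $\Delta u\ge \frac n2\frac{|\nabla u|^2}{u}\ge 0$, so $u$ is subharmonic and $R_g\ge 0$; substituting $R_g\ge 0$ back into the pinching $A^\tau_g\ge0$ then yields $Ric_g\ge 0$. Thus $(\mathbb{R}^n,g)$ is a complete manifold of nonnegative Ricci curvature, and completeness of $g=u^{-2}g_0$ imposes a growth cap on $u$: the $g$-length of a ray is $\int u^{-1}$, so $u$ can grow at most linearly at infinity. (For comparison, the stereographic factor $u=\frac{1+|x|^2}{2}$, which realizes the round sphere, grows quadratically and is indeed incomplete, the missing point being at finite $g$-distance.)

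\smallskip

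\emph{The heart of the argument and the conclusion.} The core is to upgrade the pointwise Hessian bound $\nabla^2 u\ge \lambda\, g_0$, with $\lambda=\frac{1+n\beta}{2}\frac{|\nabla u|^2}{u}-\beta\Delta u$, into rigidity. This inequality says $u$ is $\lambda$-convex along every Euclidean line, and the plan is to show that if $\nabla u\not\equiv 0$ then $u$ must grow superlinearly, contradicting the completeness cap. Concretely, I would propagate the positivity of $\nabla u$ (on the radial model this is clean, since subharmonicity makes $r^{n-1}\partial_r u$ nondecreasing) and integrate the Hessian bound along rays to produce at least linear, and in the nondegenerate range strictly superlinear, growth of $u$. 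The borderline linear profile $u=c|x|$ is exactly the cylinder $\mathbb{R}\times S^{n-1}$, which is excluded because a smooth metric on all of $\mathbb{R}^n$ requires $\nabla u(0)=0$, incompatible with the conical behaviour of $|x|$ at the origin. Since $Ric_g\ge 0$, the Omori--Yau maximum principle is available on $(\mathbb{R}^n,g)$ and can be applied to a bounded function of $u$ to make this growth/degeneration dichotomy rigorous. Once $\nabla u\equiv 0$ we get $u\equiv c$, so $g=c^{-2}g_0$ is flat, and the dilation $x\mapsto x/c$ is an isometry onto $(\mathbb{R}^n,g_0)$.

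\smallskip

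\textbf{Main obstacle.} The difficulty is entirely in the last step, and specifically in the range $\tau\le 1$ (that is, $\beta\ge 0$): there the potential $\lambda$ need not be nonnegative, because the term $-\beta\Delta u$ has the unfavorable sign, so genuine convexity of $u$ — and hence the superlinear-growth mechanism — is not immediate. Treating all $\tau\in(0,\frac{2(n-1)}{n})$ uniformly seems to require playing the subharmonicity estimate $\Delta u\ge \frac n2\frac{|\nabla u|^2}{u}$ against the completeness cap directly, rather than through convexity, presumably via a carefully chosen barrier or test function in the maximum principle; it is here that the strict inequality $\tau<\frac{2(n-1)}{n}$, which makes the cylinder inadmissible, must be exploited in full.
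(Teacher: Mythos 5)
Your setup is correct and coincides with the paper's: writing $g=u^{-2}g_0$, the hypothesis $A^\tau_g\ge 0$ is exactly the tensor inequality you derived (it is the paper's formula for $A^\tau_g$ in terms of the conformal factor, multiplied by $u/(n-2)$), and the consequences $\Delta u\ge \frac n2\frac{|\nabla u|^2}{u}$, $R_g\ge 0$, $Ric_g\ge 0$ are all valid. But the core of the proof is missing, and the obstacle you flag at the end is genuine — it is precisely where the paper's key idea lives. Restricting the Hessian inequality to straight lines only works when $\beta\le 0$, i.e. $\tau\ge 1$, where the term $-\beta\Delta u$ can be discarded: along a line through a point with $\nabla u\neq 0$, chosen in the gradient direction, one gets $u''\ge \kappa (u')^2/u$ with $\kappa=\frac{1+n\beta}{2}\in(0,1)$, hence $u'\ge c\,u^{\kappa}$ and $u(s)\ge (c_1+c_2 s)^{1/(1-\kappa)}$; this is superlinear, so that ray has finite $g$-length and the subcase closes cleanly (in fact more simply than in the paper). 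For $\tau<1$, however, you have no mechanism at all, and neither of your suggested repairs supplies one: the Omori--Yau principle is not obviously applicable to any useful test function here, and the observation that $u=c|x|$ is singular at the origin only excludes that exact profile, not general linear (or sub-quadratic) growth.

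The paper resolves exactly this difficulty by spherical averaging instead of restriction to lines. One evaluates $\phi\,A^\tau_g(\partial_r,\partial_r)\ge 0$ and averages over the Euclidean sphere $S^{n-1}(r)$: the tangential Laplacian $\bar{\Delta}\phi$ integrates to zero over the closed sphere, so the troublesome full-Laplacian term turns into $\varphi''+\frac{n-1}{r}\varphi'$, where $\varphi(r)$ is the spherical average of $\phi$ — its sign no longer matters, since it is absorbed into an ODE regardless of the sign of $1-\tau$ — while H\"older's inequality
\[
\int_{S^{n-1}(r)} \frac{(\partial_r\phi)^2}{\phi}\,\int_{S^{n-1}(r)}\phi\ \ge\ \Bigl(\int_{S^{n-1}(r)}\partial_r\phi\Bigr)^2
\]
controls the averaged gradient term. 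After substituting $\psi(t)=\varphi(e^t)$, the resulting ODE integrates to $(\psi^{1-\beta})''-\alpha(\psi^{1-\beta})'\ge 0$ (with the paper's $\alpha$, $\beta$, not yours), forcing either $\psi$ constant or $\varphi(r)\ge Cr^2$. Note this is quadratic growth of the \emph{average} only, so a pointwise ray-length contradiction is not immediate; the paper needs a second ingredient to finish: either Ma--Qing's asymptotic lower bound for conformal factors of complete metrics with $Ric\ge 0$ (this is where $Ric_g\ge0$ is used), or, for $n\ge 4$, the superharmonicity of $\phi^{-1}$ plus the mean value property to show that almost every radial ray has finite $g$-length. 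Your proposal is missing both the averaging device that treats all $\tau\in\bigl(0,\frac{2(n-1)}{n}\bigr)$ uniformly and this final conversion of average growth into a contradiction with completeness.
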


Extending the theme further, we have
\begin{thmx} \label{T2}
    Let $(M^n,g_0) (n\geq 3)$ be a scalar flat complete Riemannian manifold. Any metric $g$ conformal to $g_0$ with $A_g\geq A_{g_0}$ must be isometric to $g_0$.
\end{thmx}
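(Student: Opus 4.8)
The plan is to convert the tensor hypothesis $A_g\ge A_{g_0}$ into a sharp convexity property of the conformal factor, and then play it against the completeness of $g$. Write $g=e^{2\phi}g_0$. Since the Weyl tensor is conformally invariant, only the Schouten tensor moves, and the classical conformal transformation law reads, with $\nabla,\nabla^2,|\cdot|$ taken with respect to $g_0$,
\[
A_g-A_{g_0}=-(n-2)\Big(\nabla^2\phi-d\phi\otimes d\phi+\tfrac{1}{2}|\nabla\phi|^2 g_0\Big).
\]
Hence $A_g\ge A_{g_0}$ is exactly the Hessian inequality $\nabla^2\phi\le d\phi\otimes d\phi-\tfrac{1}{2}|\nabla\phi|^2 g_0$ on $M$. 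Taking the $g_0$-trace gives $\Delta\phi+\tfrac{n-2}{2}|\nabla\phi|^2\le 0$, which on the scalar-flat background is exactly the statement $R_g\ge 0$; this is where the hypothesis on $g_0$ enters.

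The key step is the observation that $v:=e^{-\phi/2}$ is convex on $(M,g_0)$. Indeed a direct computation gives
\[
\nabla^2 v=\tfrac{1}{2}e^{-\phi/2}\Big(\tfrac{1}{2}d\phi\otimes d\phi-\nabla^2\phi\Big)\ge\tfrac{1}{4}e^{-\phi/2}\Big(|\nabla\phi|^2 g_0-d\phi\otimes d\phi\Big)\ge 0,
\]
where the first inequality uses the Hessian bound above and the last is the Cauchy--Schwarz inequality. Here $v>0$ and $e^{\phi}=v^{-2}$.

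Now suppose, for contradiction, that $\phi$, equivalently $v$, is non-constant. Pick points $q,p$ with $v(q)<v(p)$; since $(M,g_0)$ is complete, a minimizing $g_0$-geodesic from $q$ to $p$ extends to a unit-speed ray $\gamma:[0,\infty)\to M$ with $\gamma(0)=q$ and $\gamma(\ell)=p$ for some $\ell>0$. The function $t\mapsto v(\gamma(t))$ is convex and satisfies $v(\gamma(\ell))>v(\gamma(0))$, so $v(\gamma(t))\ge at+b$ for all $t\ge\ell$ with $a=(v(p)-v(q))/\ell>0$; in particular $v(\gamma(t))\to\infty$. Consequently
\[
L_g(\gamma)=\int_0^\infty e^{\phi(\gamma(t))}\,dt=\int_0^\infty v(\gamma(t))^{-2}\,dt<\infty .
\]
On the other hand, since $v$ is bounded on every compact subset of $M$ while $v(\gamma(t))\to\infty$, the ray $\gamma$ eventually leaves every compact set, i.e.\ it is a divergent curve; and a complete Riemannian manifold admits no divergent curve of finite length (its image would lie in a closed metric ball, compact by Hopf--Rinow). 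This contradicts the completeness of $g$. Hence $\phi$ is constant, so $g=e^{2\phi}g_0$ is homothetic to $g_0$ and, after the evident rescaling, isometric to it.

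I expect the main obstacle to be identifying the correct power: the hypothesis is sharp enough to make $e^{-\phi/2}$ convex, but only just --- a larger power such as $e^{-\phi}$ would give at best a linearly growing conformal factor, whose reciprocal need not be integrable along a ray, and the argument would break. The remaining work is routine: fixing the signs in the transformation law, the elementary convexity estimate for $v$, and the standard fact about divergent curves on complete manifolds; one should also keep in mind the mild abuse in the word "isometric" once the conformal factor is found to be a (possibly nontrivial) constant.
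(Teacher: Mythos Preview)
Your argument is correct and follows the same outline as the paper's: translate $A_g\ge A_{g_0}$ into the Hessian inequality $\nabla^2\phi\le d\phi\otimes d\phi-\tfrac12|\nabla\phi|^2 g_0$, restrict to a $g_0$-geodesic, and force the $g$-length of a divergent curve to be finite. Where the paper works with the scalar ODE $f''-\tfrac12(f')^2\le 0$ along a ray---using superharmonicity of $f$ and the maximum principle to locate a direction with $f'(s_0)<0$, then integrating to $f\le -2\ln(s+C)$---you observe instead that this inequality is \emph{exactly} convexity of $v=e^{-\phi/2}$, and you prove that convexity globally as a Hessian inequality on $M$. This lets you pick any two points with distinct values and obtain linear growth of $v$ along the connecting geodesic, bypassing the superharmonic/maximum-principle step entirely. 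In particular your route never actually uses the scalar-flat hypothesis (your sentence ``this is where the hypothesis on $g_0$ enters'' is a red herring: you derive $\Delta\phi+\tfrac{n-2}{2}|\nabla\phi|^2\le 0$ by tracing the Hessian inequality itself, and then never invoke it again). So your packaging is a little cleaner and, taken at face value, proves the result for any complete background $(M,g_0)$; the paper's version routes through $R_g\ge0$ and the maximum principle but lands at the same finite-length contradiction.
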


 A famous conjecture of Hamilton asserts that a three dimensional complete manifold with {\bf pinched Ricci curvature}
 \[
 Ric_g-\epsilon Rg\geq 0 \quad \text{ for some $\epsilon>0$}
 \]
 must be either compact or flat.  Chen-Zhu~\cite{CZ} and Lott~\cite{L} confirmed this conjecture under additional curvature assumptions. Deruelle-Schulze-Simon~\cite{DSS} removed the inverse quadratic lower bound assumption in Lott's work. Most recently Lee and Topping~\cite{LT} resolved Hamilton's conjecture completely via Ricci flow, another proof is given by Huisken-Koerber~\cite{HK} using inverse mean curvature flow. Xu~\cite{X} obtained an interesting formula for the integral of scalar curvature in terms of the asympotic volume ratio on three dimensional complete manifolds with a pole and nonnegative Ricci curvature. As a consequence, he gave a simple proof of Hamilton's conjecture in that case. In higher dimension, Cheng~\cite{C} considered the case when $(M,g)$ is locally conformally flat. Using the Yamabe flow, he is able to show that a locally conformal flat manifold with $Ric-\epsilon Rg\geq 0$ for some $\epsilon>0$ must be either compact or flat.

Note the condition $A^{\tau}_g\geq 0$ is equivalent to Ricci pinching condition, combining with the classification of locally conformally flat manifolds with nonnegative Ricci curvature, our main theorem yields a simple proof of Cheng's theorem.

\begin{corollary}
Let $\left(M^n, g\right) (n \geq 3)$ be an $n$-dimensional non-flat complete locally conformally flat Riemannian manifold satisfying
$$
Ric-\epsilon R g \geq 0.
$$
for some $\epsilon\in(0,\frac{1}{n})$, then $M^n$ must be compact.
\end{corollary}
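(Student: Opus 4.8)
The plan is to deduce the Corollary from Theorem~\ref{T1} together with the classification of complete locally conformally flat manifolds of nonnegative Ricci curvature. Writing $\tau = 2(n-1)\epsilon$, the hypothesis $Ric-\epsilon Rg \geq 0$ is exactly $A^{\tau}_g \geq 0$, and the range $\epsilon\in(0,\frac{1}{n})$ corresponds precisely to $\tau\in(0,\frac{2(n-1)}{n})$. Tracing the inequality $Ric \geq \epsilon Rg$ gives $(1-n\epsilon)R \geq 0$, hence $R\geq 0$, and therefore $Ric \geq \epsilon Rg \geq 0$. Passing to the universal Riemannian cover $\pi:(\tilde M,\tilde g)\to (M,g)$, the metric $\tilde g$ is again complete, locally conformally flat, non-flat, and satisfies $A^{\tau}_{\tilde g}\geq 0$; since a continuous surjection from a compact space has compact image, if $\tilde M$ is compact then so is $M$, and hence it suffices to prove that $\tilde M$ is compact.

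Now apply the structure theorem for simply connected complete locally conformally flat manifolds with nonnegative Ricci curvature (due to Zhu, Carron--Herzlich, and others): $(\tilde M,\tilde g)$ is one of (i) conformally diffeomorphic to $S^n$, hence compact; (ii) conformally diffeomorphic to $\mathbb{R}^n$; or (iii) a rescaling of the round cylinder $\mathbb{R}\times S^{n-1}$. Case (iii) is excluded under our hypothesis: on $\mathbb{R}\times S^{n-1}$ the Ricci curvature vanishes along the $\mathbb{R}$-factor while $R=(n-1)(n-2)>0$, so $Ric-\epsilon Rg$ has a strictly negative eigenvalue in that direction for every $\epsilon>0$. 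In case (ii), $\tilde g$ is a complete metric on $\mathbb{R}^n$ conformal to the Euclidean metric with $A^{\tau}_{\tilde g}\geq 0$ and $\tau\in(0,\frac{2(n-1)}{n})$, so Theorem~\ref{T1} forces $\tilde g$ to be isometric to $g_0$; but then $\tilde M$, and hence $M$, would be flat, contrary to assumption. Only case (i) survives, so $\tilde M$, and therefore $M$, is compact.

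The computational content here is minimal: a trace identity, the behaviour of $Ric$ on the cylinder, and the fact that flatness descends to quotients. The substantive ingredient, besides Theorem~\ref{T1}, is the classification theorem, and the one point that needs care is that in its non-compact, non-cylindrical branch the universal cover is \emph{globally} conformal to flat $\mathbb{R}^n$ by a \emph{complete} metric — which is exactly the situation covered by Theorem~\ref{T1}. I expect this matching of hypotheses to be the only place where anything beyond a direct citation is needed.
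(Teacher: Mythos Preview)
Your proof is correct and follows essentially the same route as the paper: reduce the pinching hypothesis to $A^{\tau}_g\geq 0$, invoke the Zhu/Carron--Herzlich classification of complete locally conformally flat manifolds with nonnegative Ricci curvature, discard the cylinder case by the obvious eigenvalue computation, and use Theorem~\ref{T1} to eliminate the $\mathbb{R}^n$ case. The only cosmetic difference is that you pass to the universal cover before applying the classification, whereas the paper applies it directly to $M$; this is harmless and the arguments are otherwise identical.
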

\begin{proof}
Thanks to the classification of complete locally conformally flat manifolds with nonnegative Ricci curvature by Zhu~\cite{Z} and Carron-Herzlich~\cite{CH}, $(M,g)$ is one of the following:
\begin{enumerate}
    \item $M$ is non-flat and globally conformally equivalent to $\mathbb{R}^n$;
    \item $M$ is globally conformally equivalent to a space form of positive curvature;
    \item $M$ is locally isometric to the cylinder $\mathbb{R}\times S^{n-1}$;
    \item $M$ is isometric to a complete flat manifold.
\end{enumerate}
As (3) violates the pinching condition, to prove the compactness of the manifold, it suffices to rule out case (1), which follows directly from Theorem~\ref{T1}.
\end{proof}

\section{Proofs of main theorems}
The main idea behind the proof is to explore the effect of the positivity of the (modified) Schouten tensor on the conformal factor. It turns out the positivity in radial directions is strong enough to give a proper growth control for the conformal factor, which leads to a contradiction to the completeness of the metric.

Now we prove Theorem~\ref{T1}. Indeed, we prove a stronger version.
\begin{theorem}
    Let $g$ be a complete metric on $\mathbb{R}^n$ ($n\geq 3$) which is conformal to the Euclidean metric $g_0$ with nonnegative scalar curvature $R_g\geq0$. Suppose there exists $p\in \mathbb{R}^n$, and $\tau\in (0, \frac{2(n-1)}{n})$ such that
    \[
    \int_{S_r(p)} A^{\tau}_g(\partial_r, \partial_r) d\sigma \geq 0 \quad \forall r>0,
    \] where $S_r(p)$, $\partial_r$, $d\sigma$ are all with respect to $g_0$.  Then $g$ must be isometric to $g_0$.
\end{theorem}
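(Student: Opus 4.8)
\noindent\emph{Proof proposal.}

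The plan is to pass to a global conformal factor, extract from the two curvature hypotheses a pair of ordinary differential inequalities for the spherical mean of the log conformal factor, and then run a Riccati comparison that forces the factor to decay so fast that radial rays have finite length, contradicting completeness unless the factor is constant. Write $g=e^{2u}g_{0}$ and take $p=0$. Over the flat background one has $e^{2u}R_g=-2(n-1)\Delta u-(n-1)(n-2)|\nabla u|^{2}$, and, with $\partial_r$ the Euclidean unit radial field and all derivatives taken in $g_{0}$,
\[
A^{\tau}_g(\partial_r,\partial_r)=-(n-1-\tau)\,u_{rr}-(1-\tau)\tfrac{n-1}{r}u_{r}+\tfrac{\tau(n-2)}{2}u_r^{2}+\tfrac{\tau-1}{r^{2}}\Delta_{S^{n-1}}u+\tfrac{(n-2)(\tau-2)}{2r^{2}}|\nabla_{S^{n-1}}u|^{2}.
\]
Two structural points: $R_g\ge0$ is equivalent to $v:=e^{\frac{n-2}{2}u}$ being a positive superharmonic function on $\mathbb R^{n}$; and the hypothesis $\tau\in\bigl(0,\tfrac{2(n-1)}{n}\bigr)$ is used only through the sign facts $n-1-\tau>0$ and $\tau-2<0$.

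Next I would integrate over the Euclidean sphere $S_r$. Set $\bar u(r):=\tfrac{1}{|S^{n-1}|}\int_{S^{n-1}}u(r\theta)\,d\theta$. The spherical Laplacian integrates to zero, Jensen gives $\tfrac{1}{|S^{n-1}|}\int_{S^{n-1}}u_r^{2}\,d\theta\ge(\bar u')^{2}$, and $\int_{S^{n-1}}|\nabla_{S^{n-1}}u|^{2}\,d\theta\ge0$. Feeding these into the two hypotheses gives: (I) from $R_g\ge0$, $\;(r^{n-1}\bar u')'\le-\tfrac{n-2}{2}r^{n-1}(\bar u')^{2}\le0$; and (II) from $\int_{S_r}A^{\tau}_g(\partial_r,\partial_r)\,d\sigma\ge0$ together with (I), $\;(r\bar u')'\le0$. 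Deriving (II) is the one place the precise combination matters: one adds the right positive multiple of the averaged scalar curvature inequality to the averaged Schouten inequality so as to cancel the $\int_{S^{n-1}}u_r^{2}$ term, using $n-1-\tau>0$, and the surviving angular term has the favorable sign because $\tau-2<0$.

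From (I) I would run an ODE comparison: with $h=r^{n-1}\bar u'$ (smooth, $h(0)=0$) one has $h'\le-\tfrac{n-2}{2}h^{2}r^{1-n}$, and if ever $h(r_{*})<-2r_{*}^{n-2}$ then $1/h$ becomes positive at large $r$ while $h<0$, a contradiction; hence $\bar u'(r)\ge-2/r$ for all $r>0$, and if equality holds at one radius $r_{1}$ then $\psi:=h+2r^{n-2}\ge0$ satisfies $(\psi r^{-2(n-2)})'\le0$ with $\psi(r_{1})=0$, forcing $\psi\equiv0$ on $[r_{1},\infty)$, whence equality in Jensen makes $u$ radial with $u\equiv-2\log r+\mathrm{const}$ near infinity. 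From (II), $r\bar u'$ is nonincreasing; since $\bar u'(0)=0$ we get $\bar u'\le0$, and if $u$ is nonconstant then $\bar u$ is nonconstant (if $\bar u'\equiv0$ then (I) forces $u_r\equiv0$ and $\nabla_{S^{n-1}}u\equiv0$), so $\bar u'(r_{1})<0$ for some $r_{1}$ and therefore $r\bar u'(r)\le r_{1}\bar u'(r_{1})<0$, i.e. $\bar u(r)\le\bar u(r_{1})-c\log(r/r_{1})\to-\infty$.

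Finally the completeness contradiction. In the radially symmetric case this is transparent and is the model for everything: the Schouten inequality reads $(r^{a}u')'\le\kappa\,r^{a}(u')^{2}$ with $a=\tfrac{(1-\tau)(n-1)}{n-1-\tau}<1$ and $\kappa=\tfrac{1-a}{2}$, and a Riccati comparison for $\phi=r^{a}u'$ (which is negative for $r\ge r_{1}$ once $u$ is nonconstant) gives $\phi(r)\le-\tfrac{2}{r^{1-a}}(1-o(1))$, i.e. $u'(r)\le-\tfrac{2}{r}(1-o(1))$, so $u(r)\le C-\tfrac32\log r$ for large $r$ and $\int_{0}^{\infty}e^{u}\,dr<\infty$: the radial ray has finite $g$-length, impossible. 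In general, either the borderline of (I) occurs and $u\equiv-2\log r+\mathrm{const}$ near infinity, giving $\int^{\infty}e^{u}\,dr<\infty$ directly; or it does not, and one must convert $\bar u\to-\infty$ into incompleteness using the superharmonic function $v=e^{\frac{n-2}{2}u}$ — since $\bar u\to-\infty$ the nonincreasing means $\tfrac{1}{|S_r|}\int_{S_r}v$ tend to $0$, so $v$ is a Green potential, and its decay should force $\int_{0}^{\infty}\bigl(\tfrac{1}{|S_r|}\int_{S_r}e^{u}\,d\sigma\bigr)dr<\infty$, contradicting completeness via Fubini. The main obstacle I expect is exactly this last step: the hypothesis only controls the spherical average $\bar u$, whereas incompleteness is a pointwise statement about the length of some curve to infinity, so the positivity $R_g\ge0$ must be used in an essential way (through $v$ and potential theory) to bridge the gap — the radial case being the clean prototype that the general argument should emulate.
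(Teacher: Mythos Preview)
Your derivation of the pointwise formula for $A^{\tau}_g(\partial_r,\partial_r)$ is correct, and the two averaged inequalities (I) and (II) are valid; the trick of adding $\tau$ times the averaged scalar inequality to kill the $\overline{u_r^2}$ term is neat, and your Riccati bound $\bar u'(r)\ge -2/r$ from (I) is right. In the radially symmetric case your argument is complete and sharp.

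The genuine gap is exactly where you flag it, and it is fatal as stated. From (II) you only extract $\bar u(r)\le -c\log r$ with a constant $c>0$ that can be \emph{arbitrarily small} (it is $-r_1\bar u'(r_1)$ at the first radius where $\bar u'<0$). Your claimed bridge, ``$\bar u\to-\infty$ forces $\overline{v}=\overline{e^{\frac{n-2}{2}u}}\to 0$'', is false: Jensen goes the wrong way here, and one can have $\bar u\to -\infty$ while $\overline{v}$ stays bounded away from $0$. More quantitatively, feeding $\inf_{S_R}e^{u}\le R^{-c}$ into the mean-value/dyadic argument you allude to gives $\int_{B_{2R}\setminus B_R}\tfrac{e^u}{|x|^{n-1}}\lesssim R^{1-c}$, which sums only when $c>1$ --- precisely what (II) does not provide. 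Averaging the \emph{logarithm} $u$ loses the crucial exponent.

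The paper avoids this by averaging $\phi=e^{-u}$ instead of $u$. Writing $\phi A^{\tau}_g(\partial_r,\partial_r)$ and using the Cauchy--Schwarz/H\"older inequality
\[
\int_{S_r}\frac{\phi_r^{2}}{\phi}\,d\sigma \;\ge\; \frac{\bigl(\int_{S_r}\phi_r\,d\sigma\bigr)^{2}}{\int_{S_r}\phi\,d\sigma}
\]
(not Jensen) yields a \emph{closed} second-order inequality for $\varphi(r)=\overline{\phi}(r)$ with nonlinear term $(\varphi')^{2}/\varphi$. After the substitution $\psi(t)=\varphi(e^{t})$ this becomes $(\psi^{1-\beta})''-\alpha(\psi^{1-\beta})'\ge 0$ with the exact identity $\alpha/(1-\beta)=2$, forcing $\varphi(r)\ge Cr^{2}$ unless $\phi$ is constant. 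This quadratic growth is what makes the endgame work: it gives $\inf_{B_R}e^{u}\le CR^{-2}$, and then (for $n\ge 4$) superharmonicity of $e^{u}$ plus the mean-value inequality and a dyadic sum show $\int_1^{\infty}e^{u(r\theta)}\,dr<\infty$ for a.e.\ $\theta$, contradicting completeness; for $n=3$ the paper instead invokes the Ma--Qing asymptotic $u(x)\ge -m\log|x|-C$ with $m\in[0,1]$, which is immediately incompatible with $\overline{e^{-u}}\ge Cr^{2}$. The moral: average the conformal factor, not its logarithm.
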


\begin{proof} For simplicity, we may assume $p=0$.
Assume $g=\phi^{-2} g_0$, then
\begin{align} \notag
    Ric_{g}=(n-2)\frac{\nabla^2 \phi}{\phi}+(\frac{\Delta \phi}{\phi}-(n-1)\frac{|\nabla \phi|^2}{\phi^2})g_0,
\end{align}
and
\begin{align} \label{scalar}
    R_g=\phi^2 (2(n-1)\frac{\Delta \phi}{\phi}-(n-1)n \frac{|\nabla\phi|^2}{\phi^2}).
\end{align}

Thus
\[
A^{\tau}_g=(n-2)\frac{\nabla^2 \phi}{\phi}+[(1-\tau)\frac{\Delta \phi}{\phi}-(n-1-\frac{\tau n}{2})\frac{|\nabla \phi|^2}{\phi^2}] g_0.
\]
Evaluating at $\partial_r$ and multiplying both sides by $\phi$, we get
\begin{align} \label{1}
    \phi A^{\tau}(\partial_r, \partial_r)=(n-2) \frac{\partial^2 \phi}{\partial r^2}+(1-\tau)(\frac{\partial^2 \phi}{\partial r^2}+\frac{n-1}{r}\frac{\partial \phi}{\partial r}+\bar{\Delta} \phi)-(n-1-\frac{\tau n}{2})\frac{|\nabla \phi|^2}{\phi},
    \end{align}
where $\bar{\Delta}$ denotes the Laplacian on $S^{n-1}(r)$.

Let $\varphi(r)=\frac{1}{|S^{n-1}(r)|}\int_{S^{n-1}(r)} \phi (x) d\sigma$ be the spherical average of $\phi$. Taking the spherical average of both sides of (\ref{1}) and using $A^{\tau}\geq 0$, we thus have
\begin{align} \label{2}
(n-1-\tau)\varphi''(r)+\frac{(1-\tau)(n-1)}{r} \varphi'(r)-(n-1-\frac{\tau n}{2}) \frac{1}{|S^{n-1}(r)|}\int_{S^{n-1}(r)} \frac{|\nabla \phi|^2}{\phi} d\sigma \geq 0.
\end{align}
Using $|\nabla \phi|^2\geq (\frac{\partial \phi}{\partial r})^2$ and H\"{o}lder's inequality
\[
\int_{S^{n-1}(r)} \frac{(\frac{\partial \phi}{\partial r})^2}{\phi} \int_{S^{n-1}{(r)}} \phi \geq \left( \int_{S^{n-1}(r)} \frac{\partial \phi}{\partial r}
\right)^2,
\]
we proceed (\ref{2}) to get
\begin{align} \label{3}
    (n-1-\tau)\varphi''(r)+\frac{(1-\tau)(n-1)}{r} \varphi'(r)-(n-1-\frac{\tau n}{2})\frac{(\varphi'(r))^2}{\varphi(r)}\geq 0.
\end{align}
Let $\psi(t)=\varphi(e^t)$, then (\ref{3}) simplifies to
\begin{align} \label{4}
     (n-1-\tau)\psi''(t)-\tau(n-2)\psi'(t) -(n-1-\frac{\tau n}{2})\frac{(\psi'(r))^2}{\psi(r)}\geq 0.
\end{align}

Let $\alpha=\frac{\tau(n-2)}{n-1-\tau}$ and $\beta=\frac{n-1-\frac{\tau n}{2}}{n-1-\tau}$, (\ref{4}) can be written as
\[
\psi''-\alpha \psi'-\beta \frac{\psi'^2}{\psi}\geq0.
\]
Multiplying both sides by $(1-\beta)\psi^{-\beta}$, ($\beta<1$ for $\tau\in (0, \frac{2(n-1)}{n})$) we get
\[
(\psi^{1-\beta})''-\alpha (\psi^{1-\beta})'\geq 0.
\]
Note $\psi(-\infty)$ exists and $\psi'(-\infty)=0$, we infer that  $\psi$ is either a constant or
\[
\psi^{1-\beta}(t)\geq  C_1e^{\alpha t}+C_2.
\]

We shall rule out the later case. Note $\frac{\alpha}{1-\beta}=2$, so in the latter case, there exists a uniform $C$ such that
\[
\varphi(r)\geq C r^2, \quad \text{for $r$ large}.
\]
In view of (\ref{scalar}) and $R_g\geq0 $, we have that $\phi$ is a positive subharmonic function, which satisfies
\begin{align} \label{5}
\frac{1}{|S^{n-1}(r)|}\int_{S^{n-1}(r)} \phi (x) d\sigma\geq Cr^2.
\end{align}

At this point, we recall a fine asymptotic behavior of the conformal factor due to Ma-Qing (Theorem 5.2 in ~\cite{MQ}). Let $g=e^{2f}g_0$ be a complete metric with nonnegative Ricci curvature on $\mathbb{R}^n$, then there exists $m\in [0,1]$ and universal constant $C$ such that
\[
f(x)\geq -m \ln |x|-C.
\]
Therefore
\[
\phi(x)\leq e^{-f}\leq \frac{C}{|x|^m} \quad \text{for $|x|$ large}.
\]
This clearly contradicts to (\ref{5}) as $m\in [0,1]$, and the proof is complete.

We also offer an elementary proof which does not rely on Ma-Qing's estimate in the case $n\geq 4$. The idea of this proof appears in the proof of Theorem~\ref{T2} as well.

Since $n\geq 4$, an easy computation shows that $u=\phi^{-1}$ is a positive superharmonic function on $\mathbb{R}^n$. By mean value property, there exists a uniform constant $C$ such that
\[
\int_{B_{2R}} u(x) dx\leq C R^n \inf_{B_R} u.
\]
It follows from (\ref{5}) that $\inf_{B_R} u \leq \frac{C}{R^2}$.
Thus
\[
\int_{B_{2R}\setminus B_R} \frac{u(x)}{|x|^{n-1}} dx\leq \frac{1}{R^{n-1}} \int_{B_{2R}\setminus B_R} u(x) dx\leq \frac{1}{R^{n-1}} \int_{B_{2R}} u(x) dx \leq \frac{C}{R}.
\]
Taking $R=2^k$ for $k=0,1,2,\cdots$, we get that
\[
\int_{\mathbb{R}^n\setminus B_1} \frac{u(x)}{|x|^{n-1}} dx<\infty.\]

Under polar coordinates, we have
\[
\int_{\mathbb{R}^n\setminus B_1} \frac{u(x)}{|x|^{n-1}} dx =\int_{S^{n-1}(1)} \int_{1}^{\infty} u(r \theta) dr d\sigma,
\]
Thus
\[
\int_{1}^{\infty} u(r \theta) dr<\infty, \text{a.e. $\theta\in S^{n-1}(1)$}.
\]
This means the $g$-length of a divergent path is finite, contradicting to the fact that $g$ is complete.

Hence $\psi$ has to be a constant function, tracing the case when equalities hold in the above argument shows that $\phi$ is also a constant. Therefore $g$ is isometric to $g_0$.
\end{proof}

The proof of Theorem ~\ref{T2} is a reminiscent of the above proof for $n\geq 4$.  However, we just need to explore the growth control of the conformal factor along a particular choosen geodesic ray.

\begin{proof}[Proof of Theorem~\ref{T2}]
Suppose $g=e^{2f}g_0$, then the corresponding Ricci tensor and scalar curvature are
\[
Ric_g=Ric_0-(n-2)(\nabla^2 f-df \otimes df)-(\Delta f+(n-2)|\nabla f|^2) g_0,
\]
\[
R_g=R_0e^{-2f}-2(n-1)e^{-2f} \Delta f-(n-1)(n-2)e^{-2f}|\nabla f|^2.
\]
Hence the condition $A_g\geq A_{g_0}$ is equivalent to
\begin{align} \notag
-(n-2)(\nabla^2 f-df \otimes df)-\frac{n-2}{2} |\nabla f|^2g_0\geq 0.
\end{align}

Restricting above inequality to a geodesic ray $\gamma$ of $(M, g_0)$, we find that
\[
-f''(\gamma(s))+ (f'(\gamma(s)))^2-\frac{1}{2} |\nabla f|^2 \geq 0.
\]
Since $|\nabla f|^2\geq f'(\gamma(s))^2$, it follows that
\begin{align} \notag
f''(\gamma(s))-\frac{1}{2}(f'(\gamma(s)))^2\leq 0.
\end{align}

Since $R_0=0$ and $A_g\geq A_{g_0}$, if follows that $R_g\geq 0$. Consequently, from the equation of conformal change of the scalar curvature, we infer that $f$ is a superharmonic function on $(M, g_0)$. If $f$ is not constant, in view of the maximum principle, there exists a geodesic ray, such that $f'(\gamma(s_0))<0$ for some $s_0$. We claim that $f'(\gamma(s))\leq 0$ for $s\geq s_0$. Indeed, since
\[
(f'e^{-\frac{1}{2}f})'=e^{-\frac{1}{2}f}[f''-\frac{1}{2}f']\leq 0,
\]
thus if $f'(\gamma(s_1))=0$, then $f'(\gamma(s))\leq 0$ $s\geq s_1$. Therefore if $f'(\gamma(s_0))<0$, $f'(\gamma(s))$ stays nonpositive.

For simplicity, we denote $f'(\gamma(s))$ by $h(s)$. Thus  $h(s)\leq 0$ satisfies
\[
h'(s)-\frac{1}{2}h^2(s)\leq 0, \quad \text{for $s\geq s_0$}.
\]
It is easy to see that $h(s)\leq -\frac{2}{s+C}$ for some constant $C$. Consequently, $f(\gamma(s))\leq -2 \ln(s+C)$. This implies that the $g$-length of the divergent curve $\gamma$ is finite, contradicting to the completeness of $g$.
\end{proof}

We conclude this paper with following discussion. A metric $g$ is called $\Gamma_k$-positive (nonnegative) if $\sigma_j(A_g)(x)>(\geq)0$ for all $j\leq k$ and $x\in M$. We denote it by $A_g\in \Gamma_k(\overline{\Gamma_k})$.  Guan-Viaclovsky-Wang~\cite{GVW} have discovered an interesting property of the Schouten tensor, which states that a $\Gamma_k$-positive metric for some $k\geq \frac{n}{2}$ has positive Ricci curvature. Following their proof, a similar conclusion holds for the modified Schouten tensor.
\begin{proposition} Let $\tau\in (0, \frac{2(n-1)}{n})$ be fixed, then
    \[
 A^{\tau}_{g}\in \overline{\Gamma_{k}}\Longrightarrow Ric_g\geq \frac{(2k-(2-\tau)n)(n-1)}{2k(n-1)-n\tau}R g.
\]
\end{proposition}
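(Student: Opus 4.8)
The plan is to adapt the Garding-cone argument of Guan--Viaclovsky--Wang to the modified Schouten tensor $A^\tau_g$. The key algebraic fact is that if a symmetric matrix $W$ lies in the closed Garding cone $\overline{\Gamma_k}$, then for any unit eigendirection the corresponding eigenvalue is bounded below in terms of the trace; concretely one shows $\lambda_i(W) \geq -\frac{n-2k}{2k}\,\sigma_1(W)$ whenever $\sigma_j(W)\geq 0$ for $j\le k$ and $2k>n$ would make this trivial, but for general $k$ one must be careful about the sign. I would first record this linear-algebra lemma (it is standard: from $\sigma_k\ge 0$, $\sigma_{k-1}\ge 0$ and Newton--Maclaurin/Garding inequalities one deduces each eigenvalue dominates $-\frac{n-2k}{2k}\sigma_1$; when $\sigma_1\ge 0$ this gives a genuine lower bound on $\lambda_{\min}$).

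Next I would apply this with $W = A^\tau_g$. Recall $A^\tau_g = Ric_g - \frac{\tau R}{2(n-1)}g$, so $\sigma_1(A^\tau_g) = \operatorname{tr}_g A^\tau_g = R - \frac{\tau n}{2(n-1)}R = \frac{2(n-1)-\tau n}{2(n-1)}R$, which is a positive multiple of $R$ (the coefficient is positive precisely because $\tau < \frac{2(n-1)}{n}$). The lemma then yields, for every unit vector $e$,
\[
A^\tau_g(e,e) \;\ge\; -\frac{n-2k}{2k}\,\sigma_1(A^\tau_g) \;=\; -\frac{n-2k}{2k}\cdot\frac{2(n-1)-\tau n}{2(n-1)}\,R.
\]
Unwinding the definition, $A^\tau_g(e,e) = Ric_g(e,e) - \frac{\tau R}{2(n-1)}$, so
\[
Ric_g(e,e) \;\ge\; \left(\frac{\tau}{2(n-1)} - \frac{n-2k}{2k}\cdot\frac{2(n-1)-\tau n}{2(n-1)}\right) R.
\]
It then remains to simplify the coefficient: put everything over the common denominator $2k(n-1)$, and one finds the numerator collapses to $(2k - (2-\tau)n)(n-1)$ after cancellation, giving exactly $Ric_g \ge \frac{(2k-(2-\tau)n)(n-1)}{2k(n-1)-n\tau}\,R\,g$ (dividing numerator and denominator of the bracketed expression by the appropriate factor). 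I would double-check the arithmetic by specializing $\tau = 1$, where the statement must reduce to the Guan--Viaclovsky--Wang bound $Ric_g \ge \frac{2k-n}{2k(n-1)-n}Rg$.

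The main obstacle is the sign bookkeeping in the linear-algebra lemma. The inequality $\lambda_i \ge -\frac{n-2k}{2k}\sigma_1$ is only useful as a Ricci lower bound once we know $\sigma_1(A^\tau_g)\ge 0$; this is where $A^\tau_g\in\overline{\Gamma_k}$ is used (for $k\ge 1$, $\overline{\Gamma_k}\subset\overline{\Gamma_1}=\{\sigma_1\ge 0\}$), and simultaneously where the constraint $\tau<\frac{2(n-1)}{n}$ re-enters to guarantee the proportionality constant between $\sigma_1(A^\tau_g)$ and $R$ is positive (so that ``$\sigma_1\ge0$'' really means ``$R\ge 0$'', keeping the final inequality meaningful). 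One should also verify the resulting coefficient $\frac{(2k-(2-\tau)n)(n-1)}{2k(n-1)-n\tau}$ is the sharp one by checking it is attained on the model where $A^\tau_g$ has one zero eigenvalue and $n-1$ equal positive eigenvalues, mirroring the extremal configuration in the $\tau=1$ case. Modulo these checks, the proof is a direct transcription of the argument in~\cite{GVW}.
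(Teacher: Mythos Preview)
Your overall strategy is the right one, and indeed it is precisely the paper's strategy: reduce the proposition to an algebraic eigenvalue bound on $\overline{\Gamma_k}$ and then translate back to Ricci via $A^\tau_g = Ric_g - \frac{\tau R}{2(n-1)}g$. The paper phrases this as a lemma about $\Lambda =$ (eigenvalues of $Ric$) and $S_\Lambda =$ (eigenvalues of $A^\tau$), and proves it by locating the extremal configuration $\Lambda_0 = (1,\dots,1,\delta_k)$ with $S_{\Lambda_0}\in\partial\Gamma_k$, then using convexity of $\overline{\Gamma_k}$ (differentiating $t\mapsto\sigma_k((1-t)S_{\Lambda_0}+tS_\Lambda)$ at $t=0$). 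Your black-box version and their explicit extremal computation are two packagings of the same argument.

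However, the constant in your key lemma is wrong, and this is a genuine gap rather than a cosmetic one. The sharp bound for $W\in\overline{\Gamma_k}$ (with $k\ge 2$) is
\[
\lambda_{\min}(W)\ \ge\ -\frac{n-k}{n(k-1)}\,\sigma_1(W),
\]
not $-\frac{n-2k}{2k}\sigma_1(W)$. To see your version fails, take $W=(1,\dots,1,-\tfrac{n-k}{k})$: one checks $\sigma_j(W)\ge 0$ for all $j\le k$ (with equality at $j=k$), so $W\in\overline{\Gamma_k}$; yet for any $n/2<k<n$ your inequality would force $\lambda_{\min}(W)\ge \tfrac{2k-n}{2k}\sigma_1(W)>0$, contradicting $\lambda_{\min}(W)=-\tfrac{n-k}{k}<0$. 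The correct constant is exactly what the paper's extremal $S_{\Lambda_0}=(a,\dots,a,b)$ encodes. Consequently your claim that ``the numerator collapses to $(2k-(2-\tau)n)(n-1)$'' cannot be right as written: plugging your lemma into your displayed inequality for, say, $n=4$, $k=2$, $\tau=1$ gives coefficient $\tfrac16$, not the $0$ one expects. If you redo the computation with the correct constant $-\tfrac{n-k}{n(k-1)}$, the coefficient in front of $R$ comes out to $\dfrac{2k-(2-\tau)n}{2n(k-1)}$, which is what the paper's extremal $\delta_k/(n-1+\delta_k)$ actually yields.
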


For the convenience of the reader, we provide a detailed proof. The proposition will follow directly from the following algebraic lemma.

\begin{lemma}
    Let $\Lambda=(\lambda_1, \cdots, \lambda_n)\in\mathbb{R}^n$. For a fixed $\mu\in(0, \frac{1}{n})$, set
    \[S_{\Lambda}=\Lambda-\mu \sum_{i=1}^{n} \lambda_i (1, 1, \cdots, 1).
    \]Suppose $S_{\Lambda}\in \overline{\Gamma_{k}}$, then
    \[
    \min_{i} \lambda_i \geq \frac{(2k-(2-\tau)n)(n-1)}{2k(n-1)-n\tau} \sum_{i=1}^n \lambda_i.
\]
\end{lemma}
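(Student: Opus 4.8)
The plan is to reduce the inequality to an elementary consequence of the defining property of $\overline{\Gamma_k}$, namely that every $\overline{\Gamma_k}$ vector has nonnegative $k$-th elementary symmetric function \emph{and} that this forces a lower bound on the smallest entry in terms of the trace. First I would set $\sigma = \sum_{i=1}^n \lambda_i$ and observe that since the map $\Lambda \mapsto S_\Lambda$ is the linear substitution $S_\Lambda = \Lambda - \mu\sigma\,(1,\dots,1)$, its inverse has the same form: writing $s_i$ for the entries of $S_\Lambda$ and $\Sigma = \sum s_i = (1-n\mu)\sigma$, one recovers $\lambda_i = s_i + \frac{\mu}{1-n\mu}\Sigma$. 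Because $\mu \in (0,\tfrac1n)$ we have $1 - n\mu > 0$, so the sign of $\Sigma$ matches that of $\sigma$ and all the coefficients appearing are positive; this bookkeeping is what ties the constant $\tau$ into the final ratio after one substitutes $\mu = \tfrac{\tau}{2(n-1)}$ (the value coming from the modified Schouten tensor).

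The heart of the matter is the following standard fact about the cone $\overline{\Gamma_k}$: if $S \in \overline{\Gamma_k} \subset \mathbb{R}^n$ then $\min_i s_i \geq -\frac{n-k}{k}\cdot\frac{\Sigma}{n} \cdot \frac{k}{n-k}$... more precisely, $S \in \overline{\Gamma_k}$ implies $\sigma_1(S) \geq 0$ together with the sharp bound that each coordinate satisfies $s_i \geq -\frac{(n-k)}{k}\,\frac{\sigma_1(S)}{n}\cdot$ (the precise constant being $\min_i s_i \geq \frac{(k-n)}{kn}... $). Concretely, after relabeling so that $s_1 = \min_i s_i$, the vector obtained by deleting $s_1$ must lie in $\overline{\Gamma_{k-1}}$ of $\mathbb{R}^{n-1}$, hence has nonnegative sum bounded appropriately; combined with $\sigma_1(S)\ge 0$ one gets $(n-1)(-s_1) \le$ something, yielding the clean bound
\[
\min_i s_i \;\ge\; -\frac{n-k}{nk}\,\Sigma \cdot \frac{k}{n-k}\cdot\frac{?}{?},
\]
which I will state and prove by an easy induction on $k$ using the Newton–MacLaurin inequalities (or by the known characterization $\lambda \in \overline{\Gamma_k} \Rightarrow \sigma_1(\lambda) \ge \frac{n}{n-k+1}\max$-type estimates). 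Feeding $s_1 = \lambda_1 - \mu\sigma$ and $\Sigma = (1-n\mu)\sigma$ into this inequality and solving for $\lambda_1/\sigma$ produces exactly the asserted constant $\frac{(2k-(2-\tau)n)(n-1)}{2k(n-1)-n\tau}$ after substituting $\mu = \tfrac{\tau}{2(n-1)}$ and simplifying; this is a one-line algebraic manipulation once the geometric inequality is in hand.

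The main obstacle is pinning down and proving the sharp coordinatewise lower bound for vectors in $\overline{\Gamma_k}$ with the correct constant — this is the only non-formal step, and getting the constant exactly right (rather than up to a dimensional factor) is what makes the final ratio come out clean. I expect to handle it by the projection/recursion property of the cones: a $\overline{\Gamma_k}$ vector with its minimal entry removed lies in $\overline{\Gamma_{k-1}}$ in one lower dimension, so an induction starting from the trivial case $k=1$ (where $\overline{\Gamma_1}$ is just the half-space $\sigma_1 \ge 0$ and the bound $\min_i s_i \ge -\,\Sigma \cdot \frac{n-1}{n}\cdot\frac1{n-1}$ is immediate) delivers the general statement. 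One should also note that the hypothesis $\mu < \tfrac1n$, equivalently $\tau < \tfrac{2(n-1)}{n}$, is exactly what keeps $1-n\mu>0$ and the denominator $2k(n-1)-n\tau$ positive, so no sign issues arise; and the case $2k \ge (2-\tau)n$ versus $2k < (2-\tau)n$ simply determines whether the resulting Ricci pinching constant is nonnegative, which is consistent with $k \ge \tfrac n2$ being the interesting range as in Guan–Viaclovsky–Wang.
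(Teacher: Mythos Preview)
Your reduction is sound in principle: the lemma is indeed equivalent to the sharp coordinatewise lower bound
\[
\min_i s_i \;\ge\; -\frac{n-k}{n(k-1)}\,\sigma_1(S)\qquad\text{for }S\in\overline{\Gamma_k},\ k\ge 2,
\]
and substituting $s_n=\lambda_n-\mu\sigma$, $\sigma_1(S)=(1-n\mu)\sigma$, $\mu=\tfrac{\tau}{2(n-1)}$ does produce the stated constant. The gap is in how you propose to \emph{prove} that sharp bound. Your induction on $k$ cannot start at $k=1$: the cone $\overline{\Gamma_1}=\{\sigma_1\ge 0\}$ imposes no lower bound whatsoever on an individual coordinate (take $(M,0,\dots,0,-M+1)$ with $M\to\infty$), so the claimed base-case inequality is false. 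Even ignoring that, the inductive step as you describe it---delete the \emph{minimal} entry $s_n$ and apply the $(k-1,n-1)$ hypothesis---bounds the \emph{remaining} coordinates $s_1,\dots,s_{n-1}$ from below, which is vacuous since they already dominate $s_n$; it says nothing about $s_n$ itself. So the recursion, in the form you wrote, does not close, and the repeated question marks in your displayed constants are a symptom that the argument never actually produces a number.

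The paper does not use induction at all. It fixes the explicit extremal vector $\Lambda_0=(1,\dots,1,\delta_k)$ whose image $S_{\Lambda_0}=(a,\dots,a,b)$ lies on $\partial\Gamma_k$ (this forces $b=-\tfrac{n-k}{k}a$ and determines $\delta_k$), normalizes a general $\Lambda$ to have the same trace, and then exploits convexity of $\overline{\Gamma_k}$: along the segment $S_t=(1-t)S_{\Lambda_0}+tS_\Lambda$ one has $f(t)=\sigma_k(S_t)\ge 0$ with $f(0)=0$, so $f'(0)\ge 0$. Writing $f'(0)=\sum_i\sigma_{k-1}(S_{\Lambda_0}\,|\,i)(a_i-\text{(entry of }S_{\Lambda_0}))$ and using the symmetry of $S_{\Lambda_0}$ together with $\sum a_i=(n-1)a+b$ collapses this to $(a_n-b)\bigl(\sigma_{k-1}(S_{\Lambda_0}\,|\,n)-\sigma_{k-1}(S_{\Lambda_0}\,|\,1)\bigr)\ge 0$, and since $a>b$ the bracket is nonnegative, giving $a_n\ge b$, i.e.\ $\lambda_n\ge\delta_k$. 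This first-variation-at-the-extremal argument is the actual mechanism that pins down the sharp constant; if you want to salvage your approach, you should replace the induction by this (or an equivalent Lagrange-multiplier/extremal computation), not by the projection property alone.
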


\begin{proof}
    Let $\Lambda_0=(1, \cdots, 1, \delta_k)$ such that $S_{\Lambda_0}\in \overline{\Gamma_{k}}$.
Write $S_{\Lambda_0}=(a, \cdots, a, b)$, with
\[
a=1-\mu(n-1+\delta_k), \quad b=\delta_k-\mu(n-1+\delta_k).
\]
It is easy to compute that for $\delta_k=\frac{\mu(n-1)n-(n-k)}{k-n\mu}$, $S_{\Lambda_0}\in \partial \Gamma_{k}^{+}$.  Since $\mu<\frac{1}{n}$, we have that $\delta_k<1$ and $b<a$.

Now take any $\Lambda$ such that $S_{\Lambda}\in \overline{\Gamma_{k}}$, we may assume that $\sum_{i=1}^n \lambda_i=n-1+\delta_k$ and $\lambda_n=\min_{i} \lambda_i$. Write $S_{\Lambda}=(a_1, a_2, \cdots, a_n)$, to prove the lemma, it suffices to show that $a_n\geq b$.

Set $S_t=(1-t)S_{\Lambda_0}+t S_{\Lambda}$ for $t\in[0,1]$. In view of the convexity of $\overline{\Gamma_k}$, $f(t)=\sigma_k(S_t)\geq 0$, $\forall t\in [0,1]$ and $f(0)=0$. Thus
\[
0\leq f'(0)=\sum_{i=1}^{n-1} \sigma_{k-1}(S_{\Lambda_0} | i) (a_i- a) +\sigma_{k-1}(S_{\Lambda_0}| n)(a_n-b).
\]
Note $\sum_{i=1}^{n} a_i=(n-1)a+b$ and $\sigma_{k-1}(S_{\Lambda_0} | i)$ are all equal for $i=1, 2, \cdots, n-1$, thus above inequality implies
\[
(a_n-b) \left( \sigma_{k-1}(S_{\Lambda_0}| n)-\sigma_{k-1}(S_{\Lambda_0}| 1)\right)\geq 0.
\]
Since $a>b$, it follows that  $\sigma_{k-1}(S_{\Lambda_0}| n)\geq\sigma_{k-1}(S_{\Lambda_0}| 1)$. Thus $a_n\geq b$.

\end{proof}

Therefore, our main theorem implies that a complete, nonflat, locally conformally flat manifold with $A_g^{\tau}\in \overline{\Gamma_k}$ for $k>\frac{(2-\tau)n}{2}$ must be compact. Guan-Viaclovsky-Wang~\cite{GVW} proved that if $(M,g)$ is a compact, locally conformally flat and $g$ is $\Gamma_k$-positive for some $k>\frac{n}{2}$, then $(M,g)$ must be conformal to a spherical space form. Now the word 'compact' can be deleted in the above statement.

\vspace{0.2in}

\end{document}